\newtheorem{Theorem}{Theorem}
\newtheorem{Corollary}[Theorem]{Corollary}
\newcommand{\eps}{\varepsilon}
\newcommand\al{\alpha}
\newcommand\si{\sigma}
\newcommand\be{\beta}
\newcommand\ga{\gamma}
\newcommand\de{\delta}
\newcommand\De{\Delta}
\begin{document}
\title[ Remark on the  roots of  generalized Lens equations 
]
{Remark on the  roots of  generalized Lens equations }

\author
[M. Oka ]
{Mutsuo Oka }
\address{\vtop{
\hbox{Department of Mathematics}
\hbox{Tokyo  University of Science}
\hbox{1-3 Kagurazaka, Shinjuku-ku}
\hbox{Tokyo 162-8601}
\hbox{\rm{E-mail}: {\rm oka@rs.kagu.tus.ac.jp}}
}}
\keywords {Lens equation, Generalized Lens equation, Roots with sign}
\subjclass[2000]{14P05,14N99}

\begin{abstract}
We consider roots of   a generalized   Lens polynomial
$L(z,\bar z)=\bar z^m q(z)-{p(z)} $ and also harmonically splitting Lens type polynomial
$L^{hs}(z,\bar z)=r(\bar z) q(z) - p(z)$ with $\deg\, q(z)=n,\,\deg\,p(z)\le n$.
We have shown that there exists a harmonically splitting polynomial 
$r(\bar z)q(z)-p(z)$ which  take $5n+m-6$ roots, using a bifurcation family of polynomial. In this note, we show that this number can be taken by  a generalized Lens polynomial ${\bar z}^mq(z)-p(z)$ after a slight modification of the bifurcation family of a Rhie polynomial.
\end{abstract}
\maketitle

\maketitle

\section{Introduction}
Consider  a mixed polynomial of one  variable $f(z,\bar z)=\sum_{\nu,\mu}a_{\nu,\mu}z^\nu{\bar z}^\mu$.
 We consider the number  of roots of $f=0$.
 Assume that $z=\alpha$ is an isolated zero of $f=0$. 
Put $f(z,\bar z)=g(x,y)+ih(x,y)$ with $z=x+iy$ where $g=\Re( f)$ and $h=\Im (f)$. 
 We call $\al$ a positive simple root (respectively a negative simple root), if the Jacobian $J(g,h)$ is positive (resp. negative)  at $z=\al$.
\subsection{Number of roots with sign}

Let $f(z,\bar z)$ be a given mixed polynomial of one variable,
 we consider the filtration by the degree:
\[\begin{split}
 f(z,\bar z)&=f_{ d}(z,\bar z)+f_{ d-1}(z,\bar z)+\cdots+
f_{0}(z,\bar z).
\end{split}
\]
Here
  $
  f_\ell(z,\bar z):=\sum_{\nu+\mu=\ell}c_{\nu,\mu}z^\nu\bar z^\mu.
 $
We consider the case 
$f_d(z,\bar z)=z^n{\bar z}^m$ with $n+m=d$.
The total number of roots of $f(z,\bar z)=0$ with sign is denoted by $\beta(f)$.
{ \em Under the above assumption,
$\be(f)=n-m$} by Theorem 20, \cite{MixIntersection}.


\subsection{Number of roots without  the sign}
We assume that roots of $f(z,\bar z)=0$ are all simple.
The number of roots without considering the sign is denoted by $\rho(f)$.
Note that  $\rho(f)$ is not described by the highest degree part $f_d$, which was the case for
 $\be(f)$.
Consider  a mixed polynomial $f(z,\bar z)=\sum_{\nu,\mu}a_{\nu,\mu}z^\nu{\bar z}^\mu$.  We use the definitions
\[\begin {split}
&\deg_z\,f:=\max\{\nu\,|\, a_{\nu,\mu}\ne 0\}\\
&\deg_{\bar z}\,f:=\max\{\mu\,|\,a_{\nu,\mu}\ne 0\}\\
&\deg\,f:=\max\{\mu+\nu\,|\,  a_{\nu,\mu}\ne 0\}
\end{split}
\]
 $\deg_z\,f,\,   \deg_{\bar z}\,f,\, \deg\,f$ are called  {\em the holomorphic degree} , {\em  the anti-holomorphic degree}
and {\em the mixed degree} of $f$
 respectively. 
We  consider the following subclasses  of mixed polynomials:
\begin{eqnarray*}
L(n+m;n,m)&:=&\{f(z,\bar z)=\bar z^m q(z)-p(z)\,|\, \deg_z q=n,\,\deg_z p\le n\},\\
L^{hs}(n+m;n,m)&:=&
\{f(z,\bar z)=r(\bar z) q(z)-p(z)\,| \deg_{\bar z} r(\bar z)=m,\,\\
&&\qquad\qquad\deg_z q=n,\,\,\deg_z p\le n\},\\
M(n+m;n,m)&:=&\{f(z,\bar z)\,|\, \deg\,f=n+m,\,\deg_z\,f=n,\,\deg_{\bar z}\,f=m\}.\\
\end{eqnarray*}
where $p(z),q(z)\in \mathbb C[z],r(\bar z)\in \mathbb C[\bar z]$. 
Here $z$ is an affine  coordinate of $\mathbb C$ but  we do not fix $z$.
So, a mixed polynomial $f(u,\bar u)$ is called a generalized Lens polynomial or a harmonically splitting Lens type polynomial if $f$ takes the above form under some affine  coordinate $u=z+c$.
 We have  canonical inclusions:
\[
L(n+m;n,m)\,\,\subset L^{hs}(n+m;n,m)\,\,\subset M(n+m;n,m).
\]
The class $L(n+m;n,m), L^{hs}(n+m;n,m)$ corresponds to the numerators of  harmonic functions 
\[
\bar z^m-\frac{p(z)}{q(z)},\,\, r(\bar z)-\frac{p(z)}{q(z)}.
\]
In particular, $L(n+1;n,1)$ corresponds to the lens equation.
We call $\bar z^m q(z)-p(z)$ 
{\em a generalized lens polynomial}
and and  $ r(\bar z) q(z)-p(z)$  {\em a harmonically splitting lens type polynomial} respectively.

\subsection{Lens equation 
}
The following equation is known as the lens equation.
\begin{eqnarray}\label{lens-rational1}
L(z,\bar z)=\bar z-\sum_{i=1}^n  \dfrac{\si_i}{z-\al_i}=0,\quad \si_i,\al_i\in \mathbb C^*.
\end{eqnarray}
We identify the left side rational function  with the mixed polynomial given by its numerator
\[
\tilde L(z,\bar z):=L(z,\bar z)\prod_{i=1}^n(z-\al_i)\in M(n+1;n,1).
\]

\begin{Theorem} \label{bound}
 \rm (Khavinson-Neumann 
  \cite{Khavinson-Neumann})
 The number of roots  of $L$ or $\tilde L$ is bounded by $5n-5$ for $n\ge 2$.
 \end{Theorem}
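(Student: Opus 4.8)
The plan is to split the roots according to the sign of their Jacobian and to control the two signs by entirely different mechanisms: the \emph{signed} count is pinned down by the top-degree part via the result quoted in the excerpt, while the sense-reversing roots are bounded by a complex-dynamical argument. First I would clear denominators and work with
\[
\tl L(z,\bar z)=\bar z\prod_{i=1}^n(z-\al_i)-\sum_{i=1}^n\si_i\prod_{j\ne i}(z-\al_j)\in M(n+1;n,1),
\]
whose zeros off the poles $\al_i$ are exactly those of $L$. Writing $r(z)=\sum_{i=1}^n\si_i/(z-\al_i)$, a root is a solution of $\bar z=r(z)$. Since $\partial_{\bar z}(\bar z-r)=1$ and $\partial_z(\bar z-r)=-r'(z)$, the Jacobian of $(\Re\,\tl L,\Im\,\tl L)$ at such a point has the sign of $|r'(z)|^2-1$. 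I therefore call a simple root positive if $|r'(z)|>1$ and negative if $|r'(z)|<1$; because all roots are assumed simple, $|r'(z)|\ne1$ at every root. Let $N_+,N_-$ be their numbers.

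The top-degree part of $\tl L$ is $z^n\bar z$, so $\tl L$ has holomorphic degree $n$ and anti-holomorphic degree $1$. By Theorem 20 of \cite{MixIntersection} (quoted above), $\be(\tl L)=N_+-N_-=n-1$. Hence $N_+=N_-+(n-1)$, and the total we must bound is $\rho(\tl L)=N_++N_-=(n-1)+2N_-$. Everything thus reduces to proving the single estimate $N_-\le 2n-2$.

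For this I pass to the anti-holomorphic map $F(z)=\overline{r(z)}$ of the Riemann sphere, whose fixed points are exactly the solutions of $\bar z=r(z)$. Near a negative root $z_0$ one computes $F(z)-z_0\approx\overline{r'(z_0)}\,\overline{(z-z_0)}$, so $z_0$ is an \emph{attracting} fixed point precisely because $|r'(z_0)|<1$. The heart of the matter is the Fatou-type principle, valid also for anti-holomorphic maps: the immediate basin of attraction of each attracting fixed point contains at least one critical point of $F$, and the immediate basins of distinct attracting fixed points are pairwise disjoint. Since the critical points of $F$ are the zeros of $r'$, and a rational map of degree $n$ has at most $2n-2$ critical points (Riemann--Hurwitz, or directly from $\deg(P'Q-PQ')\le 2n-2$ when $r=P/Q$), choosing one critical point in each immediate basin gives an injection, whence $N_-\le 2n-2$. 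Combining, $\rho(\tl L)\le(n-1)+2(2n-2)=5n-5$.

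The main obstacle is precisely the dynamical step $N_-\le 2n-2$: one must justify the Fatou principle in the orientation-reversing setting, for instance by running the usual hyperbolic-contraction argument on $F$ directly and noting that an unbranched covering $F\colon U\to U$ of the hyperbolic immediate basin cannot strictly contract at $z_0$, and one must check that distinct attracting fixed points really claim distinct critical points. The hypothesis $n\ge2$ is essential here, since it guarantees $2n-2\ge2$ critical points and a nonempty Julia set, so that the immediate basins are genuine hyperbolic domains. By contrast the signed-count and degree bookkeeping are routine once the sign of the Jacobian has been identified as that of $|r'(z)|^2-1$.
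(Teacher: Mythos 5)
The paper offers no proof of this theorem --- it is quoted from Khavinson--Neumann \cite{Khavinson-Neumann} --- and your argument is essentially a faithful reconstruction of the proof in that reference: the signed count $N_+-N_-=n-1$ coming from the top-degree part $z^n\bar z$ (equivalently, the harmonic argument principle), combined with the Khavinson--Swiatek lemma that each attracting fixed point of the anti-holomorphic map $\overline{r(z)}$ captures one of the at most $2n-2$ critical points of $r$, yielding $N_-\le 2n-2$ and hence $\rho\le (n-1)+2(2n-2)=5n-5$. The one caveat is your standing assumption that all roots are simple with $|r'|\ne 1$; this matches the paper's own convention in \S 1.2, and the degenerate case is handled in the cited source by a perturbation argument that can only increase the number of isolated zeros, so the proposal is correct.
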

  Rhie gave an explicit polynomial which takes this bound $5n-5$ 
 in \cite{Rhie}. Thus this bound is optimal.
On the other hand, $\rho(L)\equiv n-1$ mod $2$ by Theorem 20, \cite{MixIntersection}.
\begin{Theorem}
{\rm (P. Bleher, Y. Homma, L. Ji and P. Roeder  
\cite{Bleher} )} The set of possible values of $\rho(f)$ for $f\in L(n+1,n,1)$ is equal to
 $\{n-1,n+1,\cdots, 5n-5\}$.
\end{Theorem}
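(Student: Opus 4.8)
The plan is to prove that the two sides of the claimed equality contain each other, treating the easy (upper/parity) inclusion and the hard (surjectivity) inclusion separately.

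\emph{The containment $\{\rho(f): f\in L(n+1;n,1)\}\subseteq\{n-1,n+1,\dots,5n-5\}$.} Every $f\in L(n+1;n,1)$ lies in $M(n+1;n,1)$ with top-degree part $z^n\bar z$, so Theorem~20 of \cite{MixIntersection} gives $\be(f)=n-1$. Since a simple root contributes $\pm1$ to $\be(f)$ but $+1$ to $\rho(f)$, the difference $\rho(f)-\be(f)$ is twice the number of negative roots; hence $\rho(f)\equiv n-1\pmod 2$ and $\rho(f)\ge|\be(f)|=n-1$. Together with the Khavinson--Neumann bound $\rho(f)\le 5n-5$ (Theorem~\ref{bound}), this shows every attained value lies in $\{n-1,n+1,\dots,5n-5\}$.

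\emph{The reverse inclusion: every admissible value is attained.} The two extremes are realized by explicit polynomials: $\rho=5n-5$ by Rhie's polynomial \cite{Rhie}, and $\rho=n-1$ by a lens map $\bar z-p(z)/q(z)$ all of whose $n-1$ zeros are sense-preserving (equivalently, with $|(p/q)'|>1$ at each zero, so that $\be=\rho=n-1$). The core of the argument is a deformation step. I would join a minimizing polynomial $f_0$ (with $\rho(f_0)=n-1$) to the Rhie polynomial $f_1$ (with $\rho(f_1)=5n-5$) by a continuous one-parameter family $\{f_t\}_{t\in[0,1]}\subset L(n+1;n,1)$, arranged to be generic so that its zero set changes only by simple fold bifurcations. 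At each such bifurcation a single pair of roots of opposite sign is born or annihilated, so $t\mapsto\rho(f_t)$ is locally constant off a finite set of parameters and changes by exactly $\pm2$ across each of them (while $\be(f_t)\equiv n-1$ is constant). Thus $t\mapsto\rho(f_t)$ is an integer step function running from $n-1$ to $5n-5$ with steps of size $0$ or $2$; by the discrete intermediate value property it must assume every value $n-1+2k$, $0\le k\le 2n-2$, at some intermediate parameter, which is exactly the asserted set.

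Concretely, I would produce the family by a slight modification of the Rhie data (the ``bifurcation family of a Rhie polynomial''): place $n-1$ nearly equal masses on a circle together with a central mass, and let a scale parameter $\si$ run from $0$ up to and slightly past Rhie's critical value, so that new image pairs are created one stage at a time as $\si$ crosses the relevant caustics.

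\emph{Main obstacle.} The crux is precisely the genericity needed so that $\rho$ changes by \emph{exactly} $2$, never more, so that no value is skipped. In the \emph{symmetric} Rhie family the rotational symmetry forces an entire orbit of $n-1$ image pairs to be born simultaneously, making $\rho$ jump by $2(n-1)$ at a single parameter and thereby skipping intermediate values; the genuine work is to break this symmetry and separate the bifurcations into distinct, simple folds. This requires a transversality argument ruling out simultaneous collisions and degenerate (non-fold) bifurcations, together with a perturbation argument replacing any non-generic family by a nearby generic one that still lies inside $L(n+1;n,1)$ and retains the endpoint counts $n-1$ and $5n-5$. A secondary point to pin down rigorously is the minimal endpoint itself, namely exhibiting an orientation-preserving configuration with $\rho=n-1$.
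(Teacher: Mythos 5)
First, a point of comparison that matters: the paper does \emph{not} prove this statement. It is quoted as an external result of Bleher--Homma--Ji--Roeder \cite{Bleher}, so there is no internal proof to measure your argument against; I can only assess the proposal on its own terms.

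Your first inclusion is correct, and it is precisely the combination of facts the paper itself records: the top-degree part of $\bar z q(z)-p(z)$ is a nonzero multiple of $z^{n}\bar z$, so Theorem 20 of \cite{MixIntersection} gives $\beta(f)=n-1$; since each simple root contributes $+1$ to $\rho$ and $\pm1$ to $\beta$, one gets $\rho(f)\equiv n-1 \pmod 2$ and $\rho(f)\ge n-1$; and Khavinson--Neumann gives $\rho(f)\le 5n-5$. The minimal endpoint, which you flag as a loose end, is in fact easy: $\bar z z^{n}-1$ lies in $L(n+1;n,1)$ and has exactly $n-1$ roots (writing $z=re^{i\theta}$, the equation forces $r=1$ and $(n-1)\theta\in 2\pi\mathbb Z$); the paper records the general version $\bar z^{m}z^{n}-1$ attaining $n-m$.

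The reverse inclusion, however, contains a genuine gap, and it sits exactly where you locate it --- which does not make it less of a gap, because that step is the entire content of the theorem. The claim that a path from $f_0$ to the Rhie polynomial can be chosen \emph{inside} $L(n+1;n,1)$ so that $\rho$ changes only by $\pm 2$ at isolated parameter values is a nontrivial statement about the discriminant locus of this family (the set of parameters where some root is degenerate): one must show that its singular part, and the set of parameters where two independent root collisions occur simultaneously, have real codimension at least two in the parameter space, so that a generic path avoids them and crosses the discriminant only at simple fold points. You give no argument for this, and your own symmetric example shows the failure mode is real. Note also that genericity must be established within $L(n+1;n,1)$, which is a thin real-algebraic subfamily of $M(n+1;n,1)$, so transversality in the ambient space of mixed polynomials does not transfer for free. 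The published proof of Bleher et al.\ does not rest on an abstract genericity principle; they exhibit, for each admissible value, explicit configurations and count the images directly, with controlled perturbations of Rhie-type data. As written, your second inclusion is a reasonable plan of attack rather than a proof.
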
 


\subsection{Bifurcation family}
In \cite{OkaLens}, we have constructed a generalized Lens type polynomial
which
 take $5(n-m)$-roots if $n>3m$ and we have asked {\em if this is an optimal upper bound or not}. On the other  hand,  
 for the space of harmonically splitting Lens type polynomials  $L^{hs}(n+m;n,m)$, we studied a bifurcation family 
 $\psi_t(z,\bar z):=t{\bar z}^m+\ell_n(z,\bar z)\in L^{hs}(n+m;n,m)$ starting from a given Lens polynomial  $\ell_n(z,\bar z)$ with 
 $\rho(\ell_n)=k$. Let $\al_1,\dots, \al_k$ be the roots of $\ell_n$. We have proved 
 \begin{Theorem}{\rm(\cite{OkaLens})}  $\psi_t=0$ has 
 exactly $k+m-1$ roots for small $t$. 
 Furthermore
 $k$ roots of them are near each $\al_j$ with the same sign and $m-1$ roots
 are newly born roots bifurcated from $z=\infty$. These new roots are negative roots.
 \end{Theorem}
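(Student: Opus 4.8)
The plan is to reformulate the family, following the paper's identification of a lens function with its numerator, as the mixed polynomial
\[
\psi_t(z,\bar z)=r_t(\bar z)\,q(z)-p(z),\qquad r_t(\bar z):=t\bar z^m+\bar z,
\]
where $\ell_n(z,\bar z)=\bar z\,q(z)-p(z)$ with $\deg_z q=n$ and $\deg_z p\le n-1$, so that indeed $\psi_t\in L^{hs}(n+m;n,m)$ with top term $t\,z^n\bar z^m$. I would then view $\psi_t$ as a real-analytic family of maps $\mathbb R^2\to\mathbb R^2$ and analyze its zeros separately in a fixed large disk (the persistent finite roots) and near $z=\infty$ (the newly born roots), assuming throughout $m\ge 2$ (the case $m=1$ being trivial). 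Recall the Jacobian identity $J(g,h)=|\partial_z\psi_t|^2-|\partial_{\bar z}\psi_t|^2$, so a root is positive or negative according as $|\partial_z\psi_t|$ exceeds or falls short of $|\partial_{\bar z}\psi_t|$.

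\emph{Finite roots.} Since $\psi_0=\ell_n$ has $k$ simple roots $\alpha_1,\dots,\alpha_k$ (so $J\ne 0$ there, by the definition of $\rho$), and since $\psi_t\to\ell_n$ together with its first derivatives uniformly on compact sets as $t\to0$, the real implicit function theorem produces for each $j$ and all small $t$ a unique root $\alpha_j(t)\to\alpha_j$. By continuity of the Jacobian, $J$ retains at $\alpha_j(t)$ the sign it had at $\alpha_j$, so each persistent root carries the same sign as in $\ell_n$. A compactness argument shows these are the only roots in any fixed disk $\{|z|\le R\}$: any other root would, as $t\to0$, accumulate at a zero of $\ell_n$, contradicting the local uniqueness just established. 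This yields exactly $k$ finite roots with unchanged signs.

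\emph{Roots born at infinity.} Writing $\psi_t=0$ as $r_t(\bar z)=p(z)/q(z)$ and using $p/q\to 0$ at infinity, any escaping root must satisfy $t\bar z^m+\bar z\approx 0$, i.e. $t\bar z^{\,m-1}\approx -1$, so $|z|$ is of order $|t|^{-1/(m-1)}\to\infty$. Setting $w=t^{1/(m-1)}z$ (taking $t>0$; the remaining cases are analogous), the rescaled equation converges as $t\to0$ to the limiting anti-holomorphic equation $\bar w^{\,m}+\bar w=0$, whose nonzero roots are the $m-1$ simple solutions of $\bar w^{\,m-1}=-1$; the excluded root $w=0$ corresponds to the finite regime. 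Each simple root of the limit (its Jacobian is $-|m\bar w^{\,m-1}+1|^2=-(m-1)^2\ne0$) persists, by the implicit function theorem in $w$, to exactly one escaping root of $\psi_t$. Hence there are precisely $m-1$ roots escaping to infinity. To confirm no root hides in between, I would note that on the intermediate range $R\le|z|\le\varepsilon|t|^{-1/(m-1)}$ one has $|r_t(\bar z)|\gtrsim R$ while $p/q\to0$, so $\psi_t\ne0$; the far range $|z|\gg|t|^{-1/(m-1)}$ is handled identically with $t\bar z^m$ dominant.

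\emph{Signs and total count.} To see the new roots are negative I would estimate the partials directly at an escaping root $z_*$: from $\partial_{\bar z}\psi_t=(tm\bar z^{m-1}+1)q(z)$ and $tm\bar z_*^{\,m-1}\to 1-m$ we get $|\partial_{\bar z}\psi_t|$ of order $(m-1)|q(z_*)|\sim|z_*|^{n}$, whereas $\partial_z\psi_t=r_t(\bar z)q'(z)-p'(z)$ with $r_t(\bar z_*)=p(z_*)/q(z_*)\to0$ gives $|\partial_z\psi_t|=O(|z_*|^{\,n-2})$; thus $J<0$ and each new root is negative. Combining the three regions gives exactly $k+(m-1)$ roots, and as a consistency check the signed total is $(n-1)-(m-1)=n-m$, matching $\beta(\psi_t)=n-m$ read off from the top term $t\,z^n\bar z^m$ via the cited formula. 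The routine part is the persistence of the finite roots; \textbf{the crux} is the analysis at $z=\infty$ — pinning down the scaling $w=t^{1/(m-1)}z$, proving that precisely $m-1$ simple roots emerge from $\bar w^m+\bar w=0$, and excluding stray roots in the intermediate annulus.
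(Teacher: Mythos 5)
Your proposal is correct, and its overall architecture coincides with the paper's: persistence of the $k$ finite roots by the implicit function theorem, a rescaling at $z=\infty$ yielding a limit equation with $m-1$ simple roots, a Rouch\'e-type exclusion of roots in the intermediate region, and a sign determination. (Note the paper only quotes this statement from \cite{OkaLens}; the detailed argument it actually contains is for the analogous family $\phi_t$ in $L(n+m;n,m)$, so that is the natural benchmark. Your reading of $\psi_t$ as $(t\bar z^m+\bar z)q(z)-p(z)$ is the right one — it is the only reading consistent with $\psi_t\in L^{hs}(n+m;n,m)$.) Two implementation choices differ, both to your credit or at least harmlessly. First, the paper passes to the chart $u=1/z$, divides by $\bar z^mz^n$, and then substitutes $u=vt$, extracting the factor $t^{m-1}h(v)$ with $h$ an antiholomorphic polynomial having $m-1$ simple roots; you instead rescale directly by $w=t^{1/(m-1)}z$ and obtain the limit equation $\bar w^m+\bar w=0$. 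These are equivalent (the exponent discrepancy is only apparent: in $\phi_t$ the coefficient of $\bar z^m$ is of order $t^{m-1}$, while in $\psi_t$ it is $t$), but your version avoids the inversion and makes the escaping rate $|z|\sim|t|^{-1/(m-1)}$ explicit. Second, the paper infers negativity of the new roots from the signed counts $\beta(\psi_t)=n-m$ versus $\beta(\ell_n)=n-1$ (which works only after one knows there are exactly $m-1$ new roots, so that a signed sum of $-(m-1)$ forces each sign to be $-1$); you compute the Jacobian of the limiting anti-holomorphic map, $-|m\bar w^{m-1}+1|^2=-(m-1)^2\ne 0$, which gives simplicity, persistence and negativity in one stroke and is more self-contained. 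One small caveat: you assume $\deg_z p\le n-1$ so that $p/q\to 0$ at infinity, whereas the class allows $\deg_z p\le n$; your argument only uses boundedness of $p/q$ near infinity, so nothing breaks, but the hypothesis should be stated as such rather than restricted.
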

 \section{Main result}
 \subsection{Modification of the bifurcation family and the main result}
 In this note, we answer the above question negatively. In fact, we modify the above bifurcation family
 to prove the same assertion for generalized Lens polynomials.
 We start from an arbitrary Lens type polynomial with only simple roots:
 \[
 \ell_n(z,\bar z):=\bar z q(z)-p(z),\quad \deg_z q=n,\, \deg_z p\le n,\,n\ge 2.
\] Put $k=\rho(\ell_n)$ and let $\al_1,\dots, \al_k$ be the roots of $\ell_n$.
Note that $n-1\le k\le 5n-5$ and $k\equiv n-1$ mod $2$.
Put $\ga$ be the coefficient of  $z^n$ in $q(z)$. $\ga$ is non-zero.
 Consider its small purturbation of $\ell_n(z)$ in the space of generalized Lens polynomials $L(n+m;n,m)$:
\begin{eqnarray}\label{eq-phi-t}
\phi_t(z,\bar z):=\frac{((t \bar z+\ga)^m -\ga^m)}{\ga^{m-1}mt} q(z)-p(z),\,t\in \mathbb C.
\end{eqnarray}
Note that $\phi_0(z,\bar z)=\ell_n(z,\bar z)$ and 
$\phi_t,\,t\ne 0$, corresponds to the generalized Lens equation
\[(t \bar z+\ga)^m q(z)=\ga^{m-1}mtp(z)+\ga^{m} q(z).
\]
In fact, by the change of coordinate $u=\bar t z+\bar \ga$,  $\phi_t$ takes the expected form.
\begin{Theorem}
For sufficiently small $t\in \mathbb C,\,|t|\ll 1$, $\rho(\phi_t)=k+m-1$.
Furthermore 
\begin{enumerate}
\item
$k$ roots  $\al_{j}(t), j=1,\dots, k$ are small deformation of $\al_j,\,j=1,\dots, k$ and the sign of 
$\al_{j}(t)$ is the same as that of $\al_j$.
\item $m-1$ new roots $\be_a(t), a=1,\dots, m-1$ are born at infinity i.e., $\be_a(0)=\infty$
and they are negative roots.
\end{enumerate}
Taking $\ell_n(z,\bar z)$ to be a Rhie's polynomial, we get $\rho(\phi_t)=5n+m-6$.
\end{Theorem}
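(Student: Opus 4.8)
The plan is to view $\phi_t$ as a real-analytic deformation of $\ell_n=\phi_0$ and to track separately the roots that stay bounded and those that escape to infinity as $t\to 0$. Throughout I use the Wirtinger form of the Jacobian: for a mixed polynomial $F(z,\bar z)=\Re F+i\,\Im F$ one has $J(\Re F,\Im F)=|F_z|^2-|F_{\bar z}|^2$, so a simple root is positive (resp. negative) exactly when $|F_z|>|F_{\bar z}|$ (resp. $<$) there. The apparent denominator in \eqref{eq-phi-t} cancels, so $\phi_t$ is a genuine polynomial in $(t,z,\bar z)$, jointly real-analytic in $(\Re t,\Im t,x,y)$, with $\phi_0=\ell_n$. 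Each $\al_j$ is a simple root of $\ell_n$, i.e. $J(\Re\phi_0,\Im\phi_0)(\al_j)\ne 0$; applying the implicit function theorem to the real system $\Re\phi_t=\Im\phi_t=0$ produces, for small $t$, a unique nearby root $\al_j(t)$, and since the Jacobian is nonzero and varies continuously its sign is locked. This is assertion (1). Conversely, because $\phi_t\to\ell_n$ uniformly on compact sets, any family of roots staying in a fixed disk must accumulate on $\{\al_1,\dots,\al_k\}$, and the uniqueness just proved shows the $\al_j(t)$ are the only bounded roots; hence every further root satisfies $|z|\to\infty$.

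Next I localize the escaping roots. Along any root $r_t(\bar z)=p(z)/q(z)$, and since $\deg_z p\le n=\deg_z q$ this is bounded as $|z|\to\infty$. Writing $\eta=t\bar z/\ga$, so that $r_t=\frac{\ga}{mt}\big[(1+\eta)^m-1\big]$, boundedness forces $(1+\eta)^m-1=O(|t|)$, whence $1+\eta\to\zeta$ for some $m$-th root of unity $\zeta$. If $\zeta=1$ then $\eta\to0$, $r_t(\bar z)\to\bar z$, and the equation degenerates to $\ell_n=0$, whose roots are bounded, contradicting $|z|\to\infty$. Therefore the escaping roots can only cluster at the $m-1$ points with $\bar z\approx\ga(\zeta_a-1)/t$ for the nontrivial $m$-th roots of unity $\zeta_a$ ($a=1,\dots,m-1$); in particular $|z|\sim 1/|t|$.

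To count them and read off their signs I rescale. Put $z=\xi/\bar t$ and multiply $\phi_t$ by $t\bar t^{\,n}$. The exact cancellation in $r_t(\bar\xi/t)$ together with $\deg_z p\le n$ makes every lower-order contribution carry a positive power of $t$ or $\bar t$, so the rescaled equation converges in $C^1$ on compact sets to $\Psi(\xi,\bar\xi)=c_0\,\xi^n\big[(\bar\xi+\ga)^m-\ga^m\big]$ with $c_0=(m\,\ga^{m-2})^{-1}\ne 0$. The nonzero zeros of $\Psi$ are precisely $\xi_a=\bar\ga(\bar\zeta_a-1)$, $a=1,\dots,m-1$. Writing $\Psi=c_0A(\xi)B(\bar\xi)$ with $A(\xi)=\xi^n$ and $B(\bar\xi)=(\bar\xi+\ga)^m-\ga^m$, at $\xi_a$ we have $B=0$, hence $\Psi_\xi=0$, while $\Psi_{\bar\xi}=c_0A(\xi_a)B'(\bar\xi_a)\ne0$ since $A(\xi_a)=\xi_a^n\ne0$ and $B'(\bar\xi_a)=m(\ga\zeta_a)^{m-1}\ne0$. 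Thus $J=|\Psi_\xi|^2-|\Psi_{\bar\xi}|^2=-|\Psi_{\bar\xi}|^2<0$, so each $\xi_a$ is a simple, negative zero. By the inverse function theorem each persists to a unique simple zero $\xi_a(t)$ with negative Jacobian for small $t$; since $\xi=\bar t z$ and the scalar $t\bar t^{\,n}$ alter the Jacobian only by positive factors, the corresponding roots $\be_a(t)=\xi_a(t)/\bar t$ of $\phi_t$ are simple and negative with $\be_a(0)=\infty$. This is assertion (2), and with the previous step it shows there are exactly $m-1$ escaping roots.

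Combining the two families gives $\rho(\phi_t)=k+(m-1)$; as a cross-check, the top term of $\phi_t$ is a nonzero multiple of $z^n\bar z^m$, so $\be(\phi_t)=n-m$, consistent with the $k$ sign-preserving finite roots contributing $\be(\ell_n)=n-1$ and the $m-1$ new roots contributing $-(m-1)$. Finally, Rhie's polynomial is a Lens polynomial $\bar z q(z)-p(z)$ with $\deg_z q=n$, $\deg_z p\le n$ and exactly $k=5n-5$ simple roots, so $\rho(\phi_t)=5n+m-6$. I expect the genuine obstacle to lie in the middle two steps: ruling out escaping roots hiding near $\eta=0$ or at an intermediate scale, and verifying that the rescaled limit is attained in $C^1$ so that each nondegenerate negative zero $\xi_a$ of $\Psi$ lifts to exactly one root of $\phi_t$ with the correct sign; the finite-root analysis is routine by comparison.
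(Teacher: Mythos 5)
Your argument is correct and follows the same overall skeleton as the paper's proof: the $k$ finite roots persist with their signs by a perturbation argument at simple roots, and the $m-1$ new roots are produced by rescaling at the scale $|z|\sim 1/|t|$, where the equation degenerates to a product whose nontrivial factor is $(\bar\xi+\gamma)^m-\gamma^m$ (equivalently, the paper's $h(v)=((1+\bar v)^m-\bar v^m)/(\gamma^{m-1}m)$ after the inversion $u=1/z$, $u=vt$). The two places where you genuinely diverge are worth noting. First, the exactness of the count: the paper establishes that there are no roots in the region at infinity away from the $m-1$ limit points by a Rouch\'e-type lower bound $|\tilde\phi_{t,1}|\ge M'\delta|t|^{m-1}$ on the complement $W$ of small disks, against the bound $|\tilde\phi_{t,2}|\le M''|t|^{m}$; you instead localize a priori by observing that along any root $r_t(\bar z)=p(z)/q(z)$ stays bounded for $|z|>R$, which forces $t\bar z/\gamma+1$ to approach an $m$-th root of unity and rules out intermediate scales --- a cleaner way to see that nothing can hide between the bounded region and the $1/|t|$ scale, which is exactly the gap you correctly identified as the delicate point. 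Second, the sign of the new roots: the paper deduces negativity indirectly from the signed count $\be(\phi_t)=n-m$ versus $\be(\ell_n)=n-1$ (citing Theorem 20 of \cite{MixIntersection}), whereas you compute it directly from the Wirtinger Jacobian $|\Psi_\xi|^2-|\Psi_{\bar\xi}|^2$ of the split limit $\Psi=c_0\xi^n\bigl[(\bar\xi+\gamma)^m-\gamma^m\bigr]$, where $\Psi_\xi$ vanishes and $\Psi_{\bar\xi}$ does not at each $\xi_a$. Your version is more self-contained (the intersection-theoretic count then serves only as a consistency check) and also yields nondegeneracy of the new roots explicitly; the paper's version is shorter because it leans on the general formula for $\be$. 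Both are complete proofs.
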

\begin{proof}
For sufficiently small $t$ and for each root $\al$ of $\ell_n$, by the continuity of the roots, there exists a root $\al(t)$ of $\phi_t=0$ in a neighborhood of  $\al$ with $\al(0)=\al$
and $\al(t)$ has the same orientation as $\al$. 
For $t\ne 0$, we know that $\be(\phi_t)=n-m$ for $t\ne 0$ and $\be(\ell_n)=n-1$.
Thus it is clear that we need at least $m-1$ negative roots.
Take a large $R>0$ so that $|\al_j|\le R/2$ for any $j=1,\dots, k$. For any small $\eps>0$,
there exists $\de(\eps)>0$ such that 
 $\phi_t$ has $k$ roots near each $\al_j(t),\,|t|\le \de(\eps)$ with 
the same sign as $\al_j$ in the original equation  $\ell_n=0$.
We may assume that $|\al_j(t)-\al_j|\le \eps$ for $j=1,\dots, k$ and there are no other roots of $\phi_t(z)=0$
in the disk $D_R=\{z\,|\, |z|\le R\}$.
 On the other hand, as
$\be(\phi_t)=n-m,\,t\ne 0$, we have the property $n-m\equiv k-(m-1)$, mod $2$.
Thus $\phi_t$ has at least $m-1$ new negative roots outside of the disk $D_R$.

We assert that $\phi_t$ obtains exactly $m-1$ new negative  roots near infinity.
To see this, we change the coordinate $u=1/z$ and 
dividing (\ref{eq-phi-t}) by ${\bar z}^m z^n$, we get
\begin{eqnarray}\label{eq-3}
\tilde\phi_t(u):=\frac{(t+\ga\bar u)^m-\ga^m{\bar u}^m}{\ga^{m-1}mt}\tilde q(u)-{\bar u}^m \tilde p(u)
\end{eqnarray}
 where $\tilde q,\tilde p$ are polynomials defined as 
$\tilde q(u)=u^nq(1/u),\, \tilde p(u)=u^np(1/u)$.
By the asumption $\deg\,q(z)=n$, we can write 
\[\begin{split}
\tilde q(u)&=\ga+\sum_{i=1}^n b_iu^i\\
\end{split}
\]
We will show that  for a sufficiently small $t>0$, there exist exactly $m-1$ roots  $u(t)$ which 
converges to 0 as $t\to 0$. 
Put $\tilde \phi_{t,1},\tilde\phi_{t,2}$ be the first and the second term of (\ref{eq-3}).
Putting  $u=vt$ for $t\ne 0$,
 we can write $\tilde \phi_{t,1}$ as 
\begin{eqnarray}
\tilde\phi_{t,1}(v)&:=&t^{m-1}\frac{(1+\bar v)^m-{\bar v}^m}{\ga^{m-1}m}\tilde q(vt)\\
&=&t^{m-1}h(v) \tilde q(vt)
\end{eqnarray}
where
$h(v)$ is a polynomial with a non-zero constant. $h(v)=0$ has $m-1$ simple roots, and we put them as $v=\beta_1,\dots,\beta_{m-1}$.
 Consider the disk at infinity and its subset $W$:
 \[\De:=\{u\,|\, |u|\le 1/R\},\, 
 W:=\{u\in \De\,|\,|v-\be_j|\ge \de,\,j=1,\dots, m-1\}.
 \]
On $\De$, we estimate
$1/M\le |\tilde q(vt)|\le M$ for some $M>0$.
Taking 
 a small number $\de>0$, we can make
\[\begin{split}
\left |\frac{(1+\bar v)^m-{\bar v}^m}{\ga^{m-1}m}\tilde q(vt)\right |\ge M'\de,\,\,v\in W
\end{split}
\]
with some constant $M'>0$. 
Or equivalently,
\[|\phi_{t,1}(u)|\ge |t|^{m-1}M'\de,\,u\in W.
\]
Taking $t$ small, we can make the second term of (\ref{eq-3}) as small as possible on $\De$
comparing with $|t|^{m-1}$. 
More precisely, there exists a positive number $M''$ such that 
\[
|\phi_{t,2}(u)|\le M'' |t|^{m}.
\]
Thus if  $|t|$ is sufficiently small, 
\[|\tilde \psi_t(v)|\ge \frac{M'}2 |t|^{m-1},\,\text{for}\, v\in W
\]
which implies $\tilde \psi_t(v)=0$ has one simple negative root in $D_j:=\{v\,|\, |v-\be_j|\le \de\}$
for $j=1,\dots, j$ and no root on $W$.
The negativity of these $m-1$ new roots is clear as $\beta(\phi_t)=n-m$ and $\beta(\ell_n)=n-1$.  This completes the proof.
\end{proof}
%
\subsection{Possible values of $\rho$}
Assume that $n\ge m$.
Combining Theorem 2, we can see that
\begin{Corollary} $\rho(f)$ for $f\in L(n+m;n,m)$ can takes 
the values
$\{n+m-2,\dots, 5n+m-6\}$.
\end{Corollary}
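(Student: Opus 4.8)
The plan is to obtain the corollary purely by combining the Theorem just proved with Bleher's Theorem 2; no new construction is needed. First I would record the mechanism supplied by the Theorem: for \emph{every} Lens polynomial $\ell_n\in L(n+1;n,1)$ with only simple roots and $\rho(\ell_n)=k$, the perturbed family $\phi_t\in L(n+m;n,m)$ of \eqref{eq-phi-t} satisfies $\rho(\phi_t)=k+m-1$ for all sufficiently small $t\ne 0$. Consequently, any integer of the form $k+m-1$, where $k$ is a realizable value of $\rho$ on the Lens class $L(n+1;n,1)$, is realized by some member of $L(n+m;n,m)$. So the task reduces entirely to knowing which $k$ occur.

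Second, I would feed in Theorem 2, which states that the realizable values of $\rho$ on $L(n+1;n,1)$ are exactly $\{n-1,n+1,\dots,5n-5\}$. Shifting each of these by $m-1$ produces the progression $\{n+m-2,\,n+m,\,\dots,\,5n+m-6\}$, which is the set claimed in the corollary. The top value $5n+m-6$ is attained by taking $\ell_n$ to be a Rhie polynomial, exactly as recorded in the last line of the Theorem, and the bottom value $n+m-2$ comes from a Lens polynomial with the minimal number $k=n-1$ of roots.

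I would then do the parity bookkeeping to make sure the list is covered without gaps. Since $k\equiv n-1\pmod 2$ for every realizable $k$, all shifted values $k+m-1$ share the parity of $n+m-2$; this is compatible with the constraint $\rho(f)\equiv\be(f)=n-m\pmod 2$ imposed on the whole class $L(n+m;n,m)$ by its highest-degree term $\ga z^n\bar z^m$ (here the hypothesis $n\ge m$ makes $\be=n-m\ge 0$). Thus the arithmetic progression of common difference $2$ displayed in the corollary is precisely the image of Bleher's progression under $k\mapsto k+m-1$, and every such value is hit.

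The only delicate point, and the place I would be careful, is that $\rho$ is defined only when all roots are simple, so each realizing $\phi_t$ must have simple roots. This is already guaranteed by the Theorem itself: the $k$ roots $\al_j(t)$ are small simple deformations of the simple roots $\al_j$ of $\ell_n$, while the $m-1$ new roots $\be_a(t)$ born at infinity were shown to lie one per disk $D_j$ and to be simple. Hence no additional transversality or genericity argument is required, and the corollary follows at once from the two theorems.
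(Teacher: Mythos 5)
Your proposal is correct and follows exactly the route the paper intends: combine the Bleher--Homma--Ji--Roeder description of the realizable values $k\in\{n-1,n+1,\dots,5n-5\}$ on $L(n+1;n,1)$ with the main theorem's shift $k\mapsto k+m-1$ via the bifurcation family $\phi_t$, and check the parity/simplicity bookkeeping. The paper leaves all of this implicit in the single phrase ``Combining Theorem 2,'' so your write-up is simply a fuller version of the same argument.
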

As for the lower  values $\{n-m,\dots, n+m-4\}$,
we know that these values can be taken by some polynomials in $M(n+m;n,m)$.
We do not know if these values can be taken in $L(n+m;n,m)$ or $L^{hs}(n+m;n,m)$ except 
$n-m$. For $n-m$, it can be taken by
\[
f(z,\bar z)={\bar z}^m z^n-1.
\]
\subsection{Example} Consider 
\[
f(z,\bar z)=\left(\frac{\bar z}{100}+1\right)^3(z^3-\frac 18)-z^3-\frac{3 z^2}{100}+\frac{12513}{100000}.
\]
This is a bifurcation of a Rhie type  polynomial $\ell_3(z,\bar z)$ with $\rho(\ell_3)=10$ where
\[
\ell_3(z,\bar z)=
\frac{3}{100}{\bar z}(z^3-\frac 18)-z^3-\frac{3 z^2}{100}+\frac{12513}{100000}=0.
\]
Then $\Re f=0$ is the green curve and $\Im f=0$ is the union of the real axis and the red curve.
The root $f=0$ is the intersection of two curves and we see 10 roots in the graph.
Actually there are two roots $w_1,w_2$ which are big :
\[
w_1\approx -150+86.6i,\quad w_2=\bar{w_1}.
\]
and thus $\rho(f)=12$.
\newpage
\begin{figure}[htb]
\setlength{\unitlength}{1bp}
\includegraphics[width=7cm,height=6cm]{./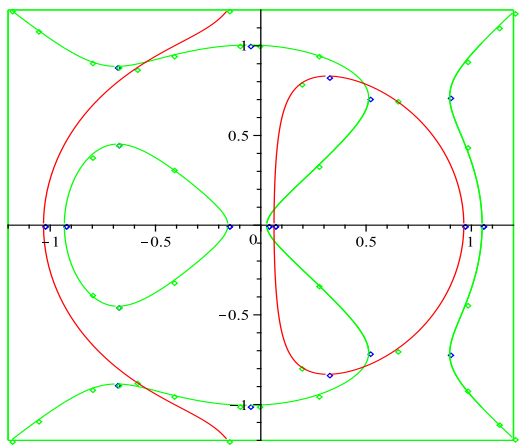}
\vspace{1cm}
\caption{Roots of $f(z,\bar z)=0$}
\end{figure}


\def\cprime{$'$} \def\cprime{$'$} \def\cprime{$'$} \def\cprime{$'$}
  \def\cprime{$'$} \def\cprime{$'$} \def\cprime{$'$} \def\cprime{$'$}

\end{document}